\theoremstyle{plain} 
\newtheorem{theorem}{\indent\sc Theorem}[section]
\newtheorem{corollary}[theorem]{\indent\sc Corollary}
\newtheorem{proposition}[theorem]{\indent\sc Proposition}
\newtheorem{lemma}[theorem]{\indent\sc Lemma}
\theoremstyle{plain} 
\theoremstyle{definition} 
\newtheorem{remark}[theorem]{\indent\sc Remark}
\newtheorem{example}[theorem]{\indent\sc Example}
\begin{document}

\title[On a neighborhood of a torus leaf]
{On a neighborhood of a torus leaf of a certain class of holomorphic foliations on complex surfaces} 

\author[T. Koike]{Takayuki Koike} 

\subjclass[2010]{ 
Primary 32J25; Secondary 14C20. 
}
%
\keywords{ 
Singular Hermitian metrics, 1-dimensional complex dynamics, Ueda's theory. 
}
\address{
Graduate School of Mathematical Sciences, The University of Tokyo \endgraf
3-8-1 Komaba, Meguro-ku, Tokyo, 153-8914 \endgraf
Japan
}
\email{tkoike@ms.u-tokyo.ac.jp}

\maketitle

\begin{abstract}
Let $C$ be a smooth elliptic curve embedded in a smooth complex surface $X$ such that $C$ is a leaf of a suitable holomorphic foliation of $X$. 
We investigate complex analytic properties of a neighborhood of $C$ under some assumptions on complex dynamical properties of the holonomy function. 
As an application, we give an example of $(C, X)$ in which the line bundle $[C]$ is formally flat along $C$ however it does not admit a $C^\infty$ Hermitian metric with semi-positive curvature. 
\end{abstract}

\section{Introduction}
Let $X$ be a smooth complex surface and $C$ be a smooth elliptic curve embedded in $X$. 
Our aim is to investigate complex analytic properties of a neighborhood of $C$ when there exists a non-singular holomorphic foliation $\mathcal{F}$ on a neighborhood of $C$ of $X$ such that $C$ is a leaf of $\mathcal{F}$. 
Because of a technical reason, we always assume the following two conditions: 
$(1)$ there exists a neighborhood $W$ of $C$ and a holomorphic submersion $\pi\colon W\to C$ such that $\pi|_C$ is the identity map, and 
$(2)$ there exists a generator $\gamma_1$ of the fundamental group $\pi_1(C, *)$ of $C$ such that the holonomy of $\mathcal{F}$ on $C$ along $\gamma_1$ is trivial: 
i.e. the holonomy morphism ${\rm Hol}_C\colon\pi_1(C, *)\to \mathcal{O}_{\mathbb{C}, 0}$ satisfies 
\[
{\rm Hol}_C(\gamma_1)(\xi)=\xi,\ {\rm Hol}_C(\gamma_2)(\xi)=f(\xi)
\]
for (a germ of) some holomorphic function $f\in\mathcal{O}_{\mathbb{C}, 0}$ with $f(0)=0$ and $f'(0)\not=0$, where $\gamma_1$ and $\gamma_2$ are the generators of $\pi_1(C, *)$. 

The main result in the present paper is the following: 

\begin{theorem}\label{thm:main}
Let $X$ be a smooth complex surface and $C$ be a smooth elliptic curve embedded in $X$. 
Assume that there exists a non-singular holomorphic foliation $\mathcal{F}$ on a neighborhood of $C$ of $X$ such that $C$ is a leaf of $\mathcal{F}$ and the conditions $(1)$ and $(2)$ above hold for some $f\in\mathcal{O}_{\mathbb{C}, 0}$ with $f(0)=0$ and $f'(0)\not=0$. 
Then the following holds: \\
$(i)$ Assume that $f$ is a rational function and that $0$ is a repelling fixed point (i.e. $|f'(0)|>1$), an attracting fixed point (i.e. $|f'(0)|<1$), or a Siegel fixed point of $f$ (i.e. $f'(0)=e^{2\mathrm{i}\pi\theta}$ for some irrational number $\theta$ and $0$ lies in the Fatou set of $f$). 
Then there exits a neighborhood $W'$ of $C$ and a harmonic function $\Phi$ defined on $W'\setminus C$ such that $\Phi(p)=-\log{\rm dist}\,(p, C)+O(1)$ as $p\to C$, where ${\rm dist}\,(p, C)$ is a local Euclidean distance from $p$ to $C$. Especially, $C$ admits a pseudoflat neighborhood system. \\
$(ii)$ Assume that $0$ is a rationally indifferent fixed point of $f$ (i.e. $f'(0)=e^{2\mathrm{i}\pi\theta}$ for some rational number $\theta$) and that $n$-th iterate $f^n$ of $f$ is not equal to the identity map around $0$ for each integer $n$. 
Then $C$ admits a strongly pseudoconcave neighborhood system. \\
$(iii)$ Assume that $f$ is a polynomial and that, for each neighborhood $\Omega$ of $0$, there exists a periodic cycle $\{f(\eta), f^2(\eta), \dots, f^m(\eta)=\eta\}$ of $f$ included in $\Omega\setminus\{0\}$. 
Then, for any neighborhood $W'$ of $C$ and any continuous function $\psi\colon W'\to(-\infty, \infty]$ whose restriction $\psi|_{W'\setminus C}$ is psh (plurisubharmonic), 
$\psi$ is bounded from above on a neighborhood of $C$. 
\end{theorem}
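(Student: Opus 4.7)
My plan is to exploit the hypothesis on periodic cycles of the holonomy $f$ to construct compact complex tori in $W' \setminus C$ that accumulate on $C$, and then to control $\psi$ via the maximum principle both on each such torus and on the shells between consecutive tori.

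First I would build the tori. By conditions $(1)$ and $(2)$, the leaves of $\mathcal{F}$ near $C$ are parametrized by the transversal disk at a chosen base point of $C$, with holonomy trivial along $\gamma_1$ and equal to $f$ along $\gamma_2$. Hence if $\eta$ is a periodic point of $f$ of period $m$, the leaf through the transversal point corresponding to $\eta$ closes up after one transport along $\gamma_1$ and $m$ transports along $\gamma_2$, giving a compact complex torus $T_\eta \subset W$ that is an unramified $m$-fold cover of $C$ via $\pi|_{T_\eta}$. Applying the hypothesis of $(iii)$ to a shrinking nested basis $\{\Omega_n\}$ of neighborhoods of $0$ and picking periodic $\eta_n \in \Omega_n \setminus \{0\}$, I obtain compact tori $T_n := T_{\eta_n} \subset W'$ accumulating on $C$ and arranged so that $T_{n+1}$ is enclosed inside $T_n$ in the total space of $\pi$.

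Since each $T_n$ is a compact complex manifold on which $\psi|_{T_n}$ is plurisubharmonic, the maximum principle forces $\psi|_{T_n}$ to be a constant $c_n$. Surjectivity of $\pi|_{T_n}$ allows me to choose, for any $p_0 \in C$, points $r_n \in T_n$ with $r_n \to p_0$; continuity of $\psi$ into the $(-\infty,\infty]$-topology then yields $c_n = \psi(r_n) \to \psi(p_0)$. Provided $\psi|_C$ takes a finite value somewhere on $C$, it follows that $\{c_n\}$ is bounded, say by $M$. To propagate the bound I apply the ordinary maximum principle on the shells $S_n$ enclosed between the consecutive tori $T_n$ and $T_{n+1}$: each $S_n$ is an open set relatively compact in $W' \setminus C$ with $\partial S_n = T_n \cup T_{n+1}$, so $\psi \leq \max(c_n, c_{n+1}) \leq M$ on $\overline{S_n}$. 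Combining these with $\psi|_{T_n} \leq M$ and the continuity of $\psi$ on $W'$ gives $\psi \leq M$ on the closure of the component of $W' \setminus T_1$ containing $C$, which is a neighborhood of $C$.

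The main obstacle is to exclude the pathological case $\psi|_C \equiv +\infty$, in which $c_n \to \infty$ and the argument above collapses. Eliminating this possibility appears to require the full strength of case $(iii)$---the polynomial nature of $f$ together with the accumulation of periodic cycles on $0$---via a more global analysis, for instance of the normal bundle $N_{C/X}$ and the obstructions it places on the existence of a plurisubharmonic function with a pole along $C$. This is the essential technical difficulty I expect; modulo it, the rest is the routine maximum-principle propagation described above.
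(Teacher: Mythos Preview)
Your plan for part $(iii)$ has a fatal topological error in the propagation step. The tori $T_n$ are complex curves in the complex surface $X$, hence of real codimension two; they do not disconnect any open subset, and there is no ``shell'' $S_n$ with $\partial S_n = T_n \cup T_{n+1}$. Concretely, over each point of $C$ the torus $T_n$ meets the transversal disk only in the finite set $\{\eta_n, f(\eta_n), \dots, f^{m_n-1}(\eta_n)\}$, and removing finitely many points from a disk leaves it connected. Thus even after you establish $\psi|_{T_n}\equiv c_n$, no maximum principle is available to control $\psi$ off the union $\bigcup_n T_n$, and the argument stalls regardless of whether $\psi|_C$ is finite.

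The difficulty you flag at the end---ruling out $\psi|_C\equiv+\infty$---is not a residual technicality but essentially the entire content of the statement, and the paper resolves it by a mechanism missing from your outline: \emph{non-compact} leaves that connect the boundary of $W'$ to arbitrarily small neighborhoods of $C$. Since $f$ is a polynomial, each repelling periodic point $\eta$ lies on $J(f)=\partial K(f)$, so arbitrarily close to $\eta$ there are points $\xi_0$ in the basin of infinity $I(f)=\mathbb{C}\setminus K(f)$. The leaf through $\xi_0$ reaches $\partial W'$ (forward iterates of $\xi_0$ escape) while backward iterates accumulate on the compact torus $T_\eta$. One then bounds $\psi$ on this leaf by the a priori constant $M=\sup_{\partial Y}\psi$, either by comparing $\psi$ with a leafwise-harmonic function built from the Green function of $K(f)$, or more simply via the removable-singularity theorem for bounded subharmonic functions on a punctured disk parametrizing the end of the leaf near $T_\eta$. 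Since such leaves exist arbitrarily close to $C$, one obtains $\liminf_{q\to p}\psi(q)\le M$ for every $p\in C$, and continuity of $\psi$ finishes. Your compact tori do appear in this argument, but only as accumulation sets; the crucial bound comes from $\partial W'$, not from $C$. (Parts $(i)$ and $(ii)$ are not addressed in your proposal; in the paper they are handled by linearization of $f$ and by Ueda's obstruction theory, respectively.)
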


Note that there actually exist examples of $(C, X)$ satisfying the assumptions in Theorem \ref{thm:main} for each of the statements $(i), (ii)$, and $(iii)$. 
It is because, as we will see in \S \ref{subsection:constr}, 
we can construct a pair $(C, X)$ which satisfies the conditions $(1)$ and $(2)$ 
for any fixed elliptic curve $C$ and any holnomy function $f\in\mathcal{O}_{\mathbb{C}, 0}$ 
(here we also need the facts on the existence of $f$ with $0$ as a Siegel fixed point \cite{Si} \cite[Theorem 6.6.4]{Be}, or a fixed point as in $(iii)$ \cite[\S 5.4]{U83}). 
Theorem \ref{thm:main} $(i)$ can be shown by concrete construction of such a harmonic function $\Phi$, and Theorem \ref{thm:main} $(ii)$ is obtained as a simple application of Ueda theory \cite{U83} (note that the normal bundle $N_{C/X}$ is topologically trivial, which follows directly from Camacho--Sad formula \cite{CS} \cite{S}). 
Thus, our main interest here is the case where $0$ is a Cremer fixed point of $f$  (i.e. 
$0$ is a irrationally indifferent fixed point lying in the Julia set of $f$), of which the situation of Theorem \ref{thm:main} $(iii)$ is a special case. 
We will show Theorem \ref{thm:main} $(iii)$ by the same technique as that used in the proof of \cite[Theorem 2]{U83} (by constructing leafwise harmonic psh function on a neighborhood of $C$ instead of the function whose complex Hessian has a negative eigenvalue). 

Here let us explain our motivation. 
Our original interest is the singularity of {\it minimal singular metrics} on 
a topologically trivial line bundle on a surface which is defined by a smooth embedded curve. 
Minimal singular metrics of a line bundle $L$ are metrics
of $L$ with the mildest singularities among singular Hermitian metrics of $L$ whose local
weights are psh. Minimal singular metrics were introduced in \cite[1.4]{DPS00}
as a (weak) analytic analogue of the Zariski decomposition. 
Let $X$ be a surface and $C$ be a smooth embedded curve with topologically trivial normal bundle, and denote by $[C]$ the line bundle defined by the divisor $C$. 
Ueda classified such a pair $(C, X)$ into the tree types: 
$(\alpha)$ when $[C]$ is not formally flat along $C$, $(\beta)$ when $[C]$ is flat around $C$, and $(\gamma)$ when $[C]$ is formally flat along $C$ however it is not flat around $C$. 
In \cite{K3}, we determined a minimal singular metric of $[C]$ when the pair $(C, X)$ is of type $(\alpha)$. 
From the argument in the proof of \cite[Corollary 3.4]{K2}, it can be shown that $[C]$ is semi-positive (i.e. $[C]$ admits a $C^\infty$ Hermitian metric with semi-positive curvature) when the pair $(C, X)$ is of type $(\beta)$. 
Then now we are interested in the case of type $(\gamma)$, especially for the example of $(C, X)$ of type $(\gamma)$ constructed in \cite[\S 5.4]{U83}, that is a motivation of this paper 
(here we note that, the setting of $(C, X)$ $(iii)$ in Theorem \ref{thm:main} is a modest generalization of this Ueda's example). 

As an application of Theorem \ref{thm:main}, we show the following: 

\begin{corollary}\label{cor:main}
Let $(C, X, \mathcal{F}, f)$ be that in Theorem \ref{thm:main}. 
Assume that $f$ is a polynomial of degree $d$ with $0$ as an irrationally indifferent fixed point. 
Denote by $\tau$ the number $f'(0)$. 
Then the following holds: \\
$(i)$ 
If there exists a positive number $M$ and $k$ such that $|\tau^n-1|^{-1}\leq M\cdot n^k$ holds for each integer $n$, 
then $[C]$ is semi-positive. \\
$(ii)$ If there is a number $A >1$ such that $\liminf_{\ell\to\infty}A^\ell\cdot|1-\tau^{\ell}|^{\frac{1}{d^\ell-1}}= 0$, then the singular Hermitian metric $|f_C|^{-2}$ is a metric on $[C]$ with the mildest singularities among singular Hermitian metrics $h$ on $[C]$ with semi-positive curvature such that $|f_C|_h$ is continuous around $C$, where $f_C\in H^0(X, [C])$ is a section whose zero divisor is $C$. 
Especially, there exists a pair $(C, X)$ of type $(\gamma)$ such that $[C]$ is not semi-positive. 
\end{corollary}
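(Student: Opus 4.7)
\emph{Proof proposal.}

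For part (i), the hypothesis $|\tau^n-1|^{-1}\le M n^k$ is the classical Diophantine condition of type $k$ on $\tau$, so by Siegel's linearization theorem $f$ is analytically linearizable at $0$ and $0$ is a Siegel fixed point of $f$. By Ueda's theory \cite{U83} the resulting linearization of the holonomy along $\gamma_2$ yields a flat structure on $[C]$ in a neighborhood of $C$, i.e.\ $(C,X)$ is of type $(\beta)$, so the global semi-positivity of $[C]$ follows from the type-$(\beta)$ argument of \cite[Cor.~3.4]{K2} recalled in the introduction. Equivalently, one can build a $C^\infty$ flat Hermitian metric on $[C]|_{W'}$ directly from Theorem~\ref{thm:main}$(i)$ by taking the local weight $\varphi_\alpha := \Phi+\log|f_{C,\alpha}|$ --- bounded on $W'$ and pluriharmonic on $W'\setminus C$ --- and extending it pluriharmonically across $C$ via Riemann's extension theorem applied to the (nonvanishing, bounded) holomorphic exponential whose real part is $\varphi_\alpha$.

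For part (ii), the first task is to verify the small-cycle hypothesis of Theorem~\ref{thm:main}$(iii)$ from the Liouville condition. Factoring $f^\ell(z)-z = z\cdot q_\ell(z)$, the polynomial $q_\ell$ has degree $d^\ell-1$ and constant term $q_\ell(0)=(f^\ell)'(0)-1=\tau^\ell-1$; applying Vieta's formula to the product of the roots of $q_\ell$ produces a nonzero periodic point $\eta_\ell$ of $f$ of period dividing $\ell$ with $|\eta_\ell|\le C\,|1-\tau^\ell|^{1/(d^\ell-1)}$. The Liouville hypothesis then gives $|\eta_\ell|=o(A^{-\ell})$ along a subsequence. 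A Gronwall-type bound $|f(z)|\le|z|(1+K|z|)$, valid near $0$ since $|\tau|=1$, controls the orbit: whenever $|\eta_\ell|=o(1/\ell)$ (which holds because $A^{-\ell}=o(1/\ell)$), the $f$-orbit of $\eta_\ell$, consisting of at most $\ell$ points, stays within factor $2$ of $|\eta_\ell|$. This produces periodic cycles of $f$ in any prescribed neighborhood of $0$ minus $\{0\}$, i.e.\ exactly the hypothesis of Theorem~\ref{thm:main}$(iii)$.

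Now take any singular Hermitian metric $h$ on $[C]$ with semi-positive curvature such that $|f_C|_h$ is continuous around $C$, and set $\psi_h:=-\log|f_C|_h$. Locally $\psi_h=\varphi_{h,\alpha}-\log|f_{C,\alpha}|$, which is psh on $X\setminus C$ (sum of a psh local weight and a pluriharmonic function) and continuous as a map $X\to(-\infty,+\infty]$ by continuity of $|f_C|_h$. Theorem~\ref{thm:main}$(iii)$ then bounds $\psi_h$ above on a neighborhood of $C$, i.e.\ $\varphi_{h,\alpha}\le\log|f_{C,\alpha}|+M$, so $h$ has singularities no weaker than those of $|f_C|^{-2}$, proving minimality. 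In particular, a $C^\infty$ semi-positive metric $h$ would force $|f_C|_h$ smooth and vanishing to first order on $C$, sending $\psi_h\to+\infty$ there in contradiction with this bound; picking $\tau$ that satisfies the Liouville condition and building $(C,X)$ as in \S\ref{subsection:constr} thus yields a pair of type $(\gamma)$ with $[C]$ not semi-positive. The main obstacle I foresee is the dynamical step of part (ii), specifically combining Vieta's bound with the Gronwall orbit-control to pass from a small periodic point to a small periodic cycle.
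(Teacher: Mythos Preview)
Your proposal is correct and follows essentially the same route as the paper. For $(i)$ both you and the paper invoke Siegel linearization to land in the Siegel case of Theorem~\ref{thm:main}$(i)$, obtain a flat metric on $[C]$ near $C$ from $\Phi$, and then globalize via the type-$(\beta)$ argument of \cite[Cor.~3.4]{K2}. For $(ii)$ both arguments apply Theorem~\ref{thm:main}$(iii)$ to $\psi=-\log|f_C|_h$ (the paper uses $-\log|f_C|_h^2$, an immaterial factor of $2$) and read off minimality of $|f_C|^{-2}$ and non-semi-positivity from the resulting upper bound.

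The one genuine difference is that the paper outsources the small-cycle hypothesis of Theorem~\ref{thm:main}$(iii)$ to \cite[\S 5.4]{U83}, while you reconstruct that argument in place: factor $f^\ell(z)-z=z\,q_\ell(z)$, use Vieta to find a periodic point $\eta_\ell$ with $|\eta_\ell|\le |a_d|^{-1/(d-1)}|1-\tau^\ell|^{1/(d^\ell-1)}$, and then bootstrap with $|f(z)|\le|z|(1+K|z|)$ to keep the full orbit of $\eta_\ell$ within a factor $2$ of $|\eta_\ell|$ once $|\eta_\ell|=o(1/\ell)$. This is exactly Cremer's classical computation as reproduced by Ueda, so the ``main obstacle'' you flag is in fact standard and your sketch of it is sound; the exponential factor $A^{-\ell}$ in the hypothesis comfortably dominates the $o(1/\ell)$ needed for the orbit control. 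Finally, for the ``type $(\gamma)$'' coda note that irrationality of $\tau$ kills all the Ueda obstruction classes $u_n$ (each $N_{C/X}^{-n}$ is a nontrivial degree-$0$ bundle on $C$, hence $H^1=0$), so the pair is not of type $(\alpha)$; non-semi-positivity then rules out type $(\beta)$, leaving $(\gamma)$.
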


As an application of Corollary \ref{cor:main}, we construct a family 
of pairs of a surface 
and a line bundle 
defined on the surface whose semi-positivity 
varies pathologically depending on 
the parameter (see Example \ref{ex:cor}). 

The organization of the paper is as follows. 
In \S 2, we prove the existence and uniqueness (up to shrinking $X$) of the pair $(C, X)$ for a fixed holonomy function $f\in\mathcal{O}_{\mathbb{C}, 0}$. 
In \S 3, we prove Theorem \ref{thm:main} and Corollary \ref{cor:main}. 
In \S 4, We give some examples. 
\vskip3mm
{\bf Acknowledgment. } 
The author would like to give heartful thanks to Prof. Shigeharu Takayama and Prof. Tetsuo Ueda whose comments and suggestions were of inestimable value. 
He also thanks Prof. Masanori Adachi and Dr. Ryosuke Nomura for helpful comments and warm encouragements. 
He is supported by the Grant-in-Aid for Scientific Research (KAKENHI No.25-2869). 


\section{Construction and uniqueness of $(C, X)$}

\subsection{construction of $(C, X)$}\label{subsection:constr}

Let $f$ be a holomorphic function defined on a neighborhood $\Omega$ of $0$ in $\mathbb{C}$ such that $f(0)=0$ and $f'(0)\not=0$, and $C=\mathbb{C}^*/\sim_\lambda$ be a smooth elliptic curve, where $0<\lambda<1$ is a constant and $\sim_\lambda$ is the relation on $\mathbb{C}^*:=\mathbb{C}\setminus\{0\}$ generated by $z\sim_\lambda \lambda\cdot z$. 
We denote by $p$ the natural map $\mathbb{C}^*\to C$. 
In this subsection, we construct a smooth complex surface $X$ as in Theorem \ref{thm:main}: 
i.e. the surface $X$ which includes $C$ as a submanifold, admits a non-singular holomorphic foliation $\mathcal{F}$ on a neighborhood of $C$ of $X$ such that $C$ is a leaf of $\mathcal{F}$, and satisfies the following two conditions $(1)$ and $(2)$: 
$(1)$ there exists a neighborhood $W$ of $C$ and a holomorphic submersion $\pi\colon W\to C$ such that $\pi|_C$ is the identity map, and 
$(2)$ the holonomy morphism ${\rm Hol}_C\colon\pi_1(C, *)\to \mathcal{O}_{\mathbb{C}, 0}$ satisfies 
\[
{\rm Hol}_C(\gamma_1)(\xi)=\xi,\ {\rm Hol}_C(\gamma_2)(\xi)=f(\xi), 
\]
where $\gamma_1:=[p(\{|z|=1\})]\in\pi_1(C, *)$ and $\gamma_2:=[p(\{z\in\mathbb{R}\mid \lambda\leq z\leq 1\})]\in\pi_1(C, *)$. 

First, fix a sufficiently small neighborhood $U_0$ of $0$ in $\Omega$ such that $f|_{U_0}$ is injective. 
Denoting by $V_0$ the image $f(U_0)\subset \mathbb{C}$, let us consider the sets 
$V_1:=U_0\cap V_0$ and $U_1:=(f|_{U_0})^{-1}(V_1)$. 
In what follows, we regard $f$ as an isomorphism from $U_1$ to $V_1$. 
Fixing a sufficiently small positive constant $\varepsilon_0$, define the constants $\lambda_1$ and $\lambda_2$ by $\lambda_1:=1-\varepsilon_0, \lambda_2:=1+\varepsilon_0$. 
Denote by $X_1$ the set $p(\{z\in\mathbb{C}\mid \lambda<|z|<1\})\times U_0$, and by $X_2$ the set $p(\{z\in\mathbb{C}\mid \lambda_1<|z|<\lambda_2\})\times U_1$. 

Next, we construct $X$ by gluing $X_1$ and $X_2$ as follows. 
Let us denote by $X^-_1$ the subset 
$p(\{z\in\mathbb{C}\mid \lambda_1<|z|<1\})\times U_1$ of $X_1$, 
and by $X^-_2$ the subset $p(\{z\in\mathbb{C}\mid \lambda_1<|z|<1\})\times U_1$ of $X_2$. 
We glue them up by the isomorphism $i^-\colon X^-_2\to X^-_1$ defined by $i^-(p(z), \xi):=(p(z), \xi)$. 
Denote by $X^+_1$ the subset 
$p(\{z\in\mathbb{C}\mid 1<|z|<\lambda_2\})\times V_1$ of $X_1$, and 
by $X^+_2$ the subset $p(\{z\in\mathbb{C}\mid 1<|z|<\lambda_2\})\times U_1$ of $X_2$. 
We glue them up by the isomorphism $i^+\colon X^+_2\to X^+_1$ defied by $i^+(p(z), \xi):=(p(z), f(\xi))$. 
Then $X_1$ and $X_2$ glue up to each other by the morphisms $i^+$ and $i^-$ above to define a smooth complex surface, by which we define $X$. 

Finally, we will check that this $X$ satisfies the conditions above. 
Note that the first projections $X_1\to p(\{z\in\mathbb{C}\mid \lambda<|z|<1\})$ and $X_2\to p(\{z\in\mathbb{C}\mid \lambda_1<|z|<\lambda_2\})$ glue up to each other to define a entire map $\pi\colon X\to C$. 
As this morphism $\pi$ is a holomorphic submersion, we can check the condition $(1)$ (the condition on $\pi|_C$ can be easily checked by the following construction of the submanifold $C\subset X$. Note also that, in this construction, $W'$ can be taken as $X$ itself). 
Next, we will define a foliation $\mathcal{F}$ on $X$. 
Let $\mathcal{F}_1$ be the foliation on $X_1$ whose leaves are $\{\{(p(z), \xi)\in X_1\mid \xi=c\}\}_{c\in U_0}$, 
and $\mathcal{F}_2$ be the foliation on $X_2$ whose leaves are $\{\{(p(z), \xi)\in X_2\mid \xi=c\}\}_{c\in U_1}$. 
These two foliations glue up to each other by the morphisms $i^+$ and $i^-$ above to define the foliation on $X$, which we denote by $\mathcal{F}$. 
As $f(0)=0$, the leaves $\{(p(z), \xi)\in X_1\mid \xi=0\}$ and $\{(p(z), \xi)\in X_2\mid \xi=0\}$ glue up to define a compact connected leaf of $\mathcal{F}$, which is naturally isomorphic to $C$. We regard this compact leaf as a submanifold $C\subset X$. 
From this construction, one can easily check the condition $(2)$. 

\subsection{uniqueness of $(C, X)$}

Here we will show the following: 

\begin{proposition}\label{prop:uniq}
Let $f$ be an element of $\mathcal{O}_{\mathbb{C}, 0}$ such that $f(0)=0$ and $f'(0)\not=0$ and $C$ be a smooth elliptic curve. 
Let $X'$ be a surface as in Theorem \ref{thm:main}: 
i.e. $X'$ includes $C$ as a submanifold, admits a non-singular holomorphic foliation $\mathcal{F}'$ on a neighborhood of $C$ of $X'$ such that $C$ is a leaf of $\mathcal{F}'$, and satisfies the following two conditions $(1)$ and $(2)$: 
$(1)$ there exists a neighborhood $W'$ of $C$ and a holomorphic submersion $\pi'\colon W'\to C$ such that $\pi'|_C$ is the identity map, and 
$(2)$ the holonomy morphism ${\rm Hol}'_C\colon\pi_1(C, *)\to \mathcal{O}_{\mathbb{C}, 0}$ satisfies 
\[
{\rm Hol}'_C(\gamma_1)(\xi)=\xi,\ {\rm Hol}'_C(\gamma_2)(\xi)=f(\xi), 
\]
where $\gamma_1$ and $\gamma_2$ are those in \S \ref{subsection:constr}. 
Then, by shrinking $U_0$ in \S \ref{subsection:constr} if necessary, there exists an holomorphic map $j\colon X\to W'$ such that $j$ is an isomorphism to the image of $j$, $j(C)=C\subset X'$, $j$ preserves the foliation structures, and that 
$\pi'\circ j=\pi$ holds, where $(X, \pi, \mathcal{F})$ are those in \S \ref{subsection:constr}. 
\end{proposition}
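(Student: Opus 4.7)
My plan is to trivialize $\mathcal{F}'$ over the preimages of the two annular open sets $V_\alpha := p(\{\lambda < |z| < 1\})$ and $V_\beta := p(\{\lambda_1 < |z| < \lambda_2\})$ of $C$, then match the resulting transition cocycle with the gluing cocycle that defines $X$. Note that $\pi_1(V_\alpha)$ and $\pi_1(V_\beta)$ both map into $\langle \gamma_1 \rangle \subset \pi_1(C)$, since any loop in either annulus is homotopic in $C$ to a multiple of $\gamma_1$. By condition $(2)$ the $\gamma_1$-holonomy of $\mathcal{F}'$ is trivial, so the restrictions of $\mathcal{F}'$ to $(\pi')^{-1}(V_\alpha)$ and $(\pi')^{-1}(V_\beta)$ have trivial holonomy. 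Choosing a base point $*_\beta = * = p(1) \in V_\beta$ together with a coordinate $\xi$ on the transversal $(\pi')^{-1}(*)$ for which the holonomy of $\mathcal{F}'$ along $\gamma_2$ is literally $f$ (possible by $(2)$), and similarly choosing a base point $*_\alpha \in V_\alpha$, parallel transport along paths in $V_\alpha$ and $V_\beta$ respectively is path-independent and yields biholomorphisms $\psi_\alpha \colon V_\alpha \times U_\alpha \to (\pi')^{-1}(V_\alpha)$ and $\psi_\beta \colon V_\beta \times U_\beta \to (\pi')^{-1}(V_\beta)$ (onto neighborhoods of $V_\alpha, V_\beta$ in $W'$, after shrinking the transversal disks $U_\alpha, U_\beta$) that are the identity on the zero sections, intertwine $\pi'$ with $\mathrm{pr}_1$, and send the product foliation to $\mathcal{F}'$.

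Next, the overlap $V_\alpha \cap V_\beta$ has two connected components, a ``lower'' component $V^-$ near $|z| = \lambda$ and an ``upper'' component $V^+$ near $|z| = 1$. The transition $\psi_\alpha^{-1} \circ \psi_\beta$ preserves both the foliation and $\pi'$, so on each component it takes the form $(z, \xi) \mapsto (z, \phi^\pm(\xi))$ with $\phi^\pm$ independent of $z$. Adjusting the parameterization in $\psi_\alpha$ by the biholomorphism $\phi^-$, I may arrange $\phi^- = \mathrm{id}$; then $\phi^+$ becomes the holonomy around a loop in $V_\alpha \cup V_\beta$ that exits through $V^+$ and returns via $V^-$, a loop homotopic in $C$ to $\gamma_2$, and by the normalization of $\xi$ this holonomy equals $f$. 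Thus $\phi^+ = f$. On the other hand, direct inspection of \S\ref{subsection:constr} shows that the gluings $i^-, i^+$ used to build $X$ are precisely $\mathrm{id}$ on $X^-$ and $(z, \xi) \mapsto (z, f(\xi))$ on $X^+$, matching $\phi^-$ and $\phi^+$. Therefore $\psi_\alpha$ and $\psi_\beta$ assemble into a single holomorphic embedding $j \colon X \to W'$ satisfying $\pi' \circ j = \pi$, $j(C) = C$, and preserving the foliation structures.

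The main obstacle is the existence of the trivializations $\psi_\alpha, \psi_\beta$ on products $V_\alpha \times U_\alpha$ and $V_\beta \times U_\beta$ with uniform transversal disks. Trivial holonomy only supplies local flow-box trivializations near each point of $V_\alpha, V_\beta$, and while path-independence lets one patch them consistently, a compactness argument (applied to a closed annulus containing each of $V_\alpha, V_\beta$, or equivalently to a fundamental annulus for the deck action of $\mathbb{C}^* \to C$) is needed to extract uniform $U_\alpha, U_\beta$. The remaining step---tracking the orientations so that the cocycle $\phi^-, \phi^+$ matches $i^-, i^+$ and not, say, their inverses---is a careful but direct unraveling of the definitions from \S\ref{subsection:constr}.
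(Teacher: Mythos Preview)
Your proof is correct and follows essentially the same route as the paper's. The paper phrases the argument dually: instead of building trivializations $\psi_\alpha,\psi_\beta$ into $W'$, it chooses a single transversal fiber $\pi'^{-1}(p)$ over a base point in the overlap and defines leafwise-constant first integrals $h_1,h_2$ on $\pi'^{-1}(V_\alpha)$ and $\pi'^{-1}(V_\beta)$ (whose existence uses exactly your observation that the $\gamma_1$-holonomy is trivial); the maps $q\mapsto(\pi'(q),h_1(q))$ and $q\mapsto(\pi'(q),f^{-1}h_2(q))$ then give isomorphisms to $X_1,X_2$ that match the gluings $i^\pm$, which is precisely your cocycle computation $\phi^-=\mathrm{id}$, $\phi^+=f$ read in the inverse direction.
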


\begin{proof}
By shrinking $X'$, we may assume that $W'=X'$. 
Denote by $p$ the point $p((1+\lambda_2)/2)\in C$, and fix an embedding $\pi'^{-1}(p)\to\mathbb{C}$ (by shrinking $W'$ if necessary). 
We also may assume that $U_0$ is small enough so that we can regard it as a subset of $\pi'^{-1}(p)$: $U_0\subset \pi'^{-1}(p)$. 
Let $h_1\colon \pi'^{-1}(p(\{\lambda<|z|<1\}))\to \mathbb{C}$ be the leafwise constant holomorphic extension of the inclusion $\pi'^{-1}(p)\to\mathbb{C}$, 
and $h_2\colon \pi'^{-1}(p(\{\lambda_1<|z|<\lambda_2\}))\to \mathbb{C}$ be the leafwise constant holomorphic extension of  the inclusion $\pi'^{-1}(p)\to\mathbb{C}$ (note that the condition on the holonomy along $\gamma_1$ is needed for the existence of $h_1$ and $h_2$). 
Letting $X_1':=\{q\in \pi'^{-1}(p(\{\lambda<|z|<1\}))\mid h_1(q)\in U_0\}$ and 
$X_2':=\{q\in \pi'^{-1}(p(\{\lambda_1<|z|<\lambda_2\}))\mid h_2(q)\in V_1\}$, consider the maps
\[
X_1'\to X_1\colon q\to (\pi'(q), h_1(q)),\ X_2'\to X_2\colon q\to (\pi'(q), (f|_{U_1})^{-1}(h_2(q))). 
\]
As these maps are holomorphic and bijective, and as $\pi'$ is submersion, we can conclude that both of these two maps are isomorphisms. 
By using the condition on the holonomy along $\gamma_2$, we can easily show that the subset $X_1'\cup X_2'$ of $X'$ is isomorphic to $X$ and the proposition holds. 
\end{proof}

\section{Proof}

\subsection*{Proof of Theorem \ref{thm:main}}

Let $f\in\mathcal{O}_{\mathbb{C}, 0}$ be an element such that $f(0)=0$ and $\tau:=f'(0)\not=0$. 
From Proposition \ref{prop:uniq}, It is sufficient to show Theorem \ref{thm:main} for $(C, X, \mathcal{F}, \pi)$ we constructed in \S \ref{subsection:constr}. 

\subsubsection*{Proof of Theorem \ref{thm:main} $(i)$}

Assume that $0$ is a repelling fixed point, an attracting fixed point, or a Siegel fixed point of $f$. 
According to \cite[Theorem 6.3.2, 6.6.2]{Be} and the comment near the proof of \cite[Theorem 6.4.1]{Be}, there exists an element $h\in\mathcal{O}_{\mathbb{C}, 0}$ such that $h(0)=0$, $h'(0)=1$, and $f(h(\xi))= h(\tau\cdot\xi)$ holds. 
Thus, without loss of generality, we may assume that $f$ is linear: $f(\xi)=\tau\cdot \xi$. 

First we define a function $\Phi$ on $X$ as follows: 
Define the function $\Phi_1$ on $X_1$ by 
\[
\Phi_1(p(z), \xi) := \log|\xi|+a\cdot \log|z|, 
\]
where $a:=\frac{-\log |\tau|}{\log\lambda}$ and  $z$ is a complex number such that $\lambda<|z|<1$ and $\xi\in U_0$. 
Also define the function $\Phi_2$ on $X_2$ by
\[
\Phi_2(p(z), \xi) := \log|\xi|+a\cdot \log|z|
\]
for each $z$ such that $\lambda_1<|z|<\lambda_2$ and $\xi\in U_1$. 
It is clear that $(i^-)^*\Phi_1=\Phi_2|_{X^-_2}$ holds.  
The equality $(i^+)^*\Phi_1=\Phi_2|_{X^+_2}$ also can be shown by the calculation as follows: for each $z$ such that $1<|z|<\lambda_2$, 
\[
(i^+)^*\Phi_1(p(z), \xi)=\Phi_1(p(z), f(\xi))=
\log|f(\xi)|+a\log|\lambda\cdot z|=
\Phi_2(p(z), \xi). 
\]
Now we showed that the functions $\Phi_1$ and $\Phi_2$ glue up to define a function, by which we define the function $\Phi$. 

Clealy $\Phi$ is harmonic with $\Phi(p)=-\log{\rm dist}\,(p, C)+O(1)$ as $p\to C$, which shows Theorem \ref{thm:main} $(i)$. 

\subsubsection*{Proof of Theorem \ref{thm:main} $(ii)$}

Assume that $f$ is non-linear and that $0$ is a rationally indifferent fixed point of $f$. 
Then we may assume that $f$ has the expansion $f(\xi)=\tau\cdot\xi+A\cdot \xi^{n+1}+O(\xi^{n+2})$ for some integer $n$ $(A\not=0)$. 
Let us consider the cohomology class
\[
u_n:=[\{(X^-_2\cap C, 0), (X^+_2\cap C, \tau^{-1}\cdot A)\}]\in H^1(C, N_{C/X}^{-n}). 
\]
If $u_n=0$ holds, then there exists a $0$-cochain $\{(X_1\cap C, h_1), (X_2\cap C, h_2)\}\in C^0(C, N_{C/X}^{-n})$ such that 
$\delta\{(X_1\cap C, h_1), (X_2\cap C, h_2)\}=\{(X^-_2\cap C, 0), (X^+_2\cap C, \tau^{-1}\cdot A)\}$: i.e. 
\[
h_1=
\begin{cases}
    h_2 & ({\rm on}\ X^-_2\cap C) \\
    \tau^{n}h_2-\tau^{-1}\cdot A & ({\rm on}\ X^+_2\cap C)
  \end{cases}
\]
holds. 
Without loss of generality,  we may assume that both $h_1$ and $h_2$ are constant functions (It is because, if $\tau^n\not=1$, then we can use the constant functions $h_1=h_2=(\tau^n-1)^{-1}\cdot \tau^{-1}\cdot A$. 
If $\tau^n=1$, then the holomorphic functions $\exp(2\textrm{i}\pi\tau A^{-1}\cdot h_1)$ and $\exp(2\textrm{i}\pi\tau A^{-1}\cdot h_2)$ glue up to define a entire non-vanishing holomorphic function on $C$, which shows that $h_1$ and $h_2$ are constants). 
Note that one can deduce directly from the above that the constant $h_1$ satisfies $h_1= \tau^{n}h_1-\tau^{-1}\cdot A$. Let us set 
\[
h(\xi):=\xi+h_1\cdot\xi^{n+1}. 
\]
Then 
\begin{eqnarray}
h^{-1}f(h(\xi))&=&h^{-1}(\tau\cdot(\xi+h_1\cdot\xi^{n+1})+ A\cdot\xi^{n+1}+O(\xi^{n+2}))\nonumber \\
&=& h^{-1}(\tau\cdot\xi+(\tau\cdot h_1+A)\cdot\xi^{n+1}+O(\xi^{n+2}))\nonumber \\
&=& (\tau\cdot\xi+(\tau\cdot h_1+A)\cdot\xi^{n+1})-h_1\cdot(\tau\cdot\xi)^{n+1}+O(\xi^{n+2})\nonumber \\
&=& \tau\cdot\xi+\tau\cdot((1-\tau^{n})\cdot h_1+\tau^{-1}A)\cdot\xi^{n+1}+O(\xi^{n+2})\nonumber \\
&=& \tau\cdot\xi+O(\xi^{n+2})\nonumber
\end{eqnarray}
holds. 
Thus, by using new coordinate $\xi ':=h(\xi)$ instead of $\xi$, 
we can expand $f$ as $f(\xi')=\tau\cdot\xi'+O(\xi'^{n+2})$, and thus we can define the cohomology class $u_{n+1}$ just as in the same manner. 
Note that these $u_n$ defined as above is equal to the trivial element $0\in H^1(C, N_{C/X}^{-n})$ if and only if the obstruction class $u_n(C, X)$ is trivial 
(it is clear from the definition of the obstruction class $u_n(C, X)$, see \cite[\S 2.1]{U83}, see also \cite[\S 3]{K3}). 


Assume that $u_n\not=0$ holds for some integer $n$. 
In this case, as the pair $(C, X)$ is of type $(\alpha)$ in the classification by Ueda \cite[\S 5]{U83}, we can apply \cite[Theorem 1]{U83} and its corollary, which proves Theorem \ref{thm:main} $(ii)$. 

Therefore all we have to do is to show that there actually exists an integer $n$ such that $u_n\not=0$ holds. 
Assume that $u_n=0$ for all $n$. 
Let $m\geq 1$ be an integer such that $\tau^m=1$. 
Then, from the assumption, $f^m$ can be expanded as below: 
\[
f^m(\xi)=\xi+A\cdot \xi^{n+1}+O(\xi^{n+2}), 
\]
where $A$ in a non-zero constant. 
Since $u_1=u_2=\dots, u_n=0$, we can choose a suitable polynomial $h(\xi)=\xi+O(\xi^2)$ such that $h^{-1}\circ f\circ h(\xi)=\tau\cdot\xi+O(\xi^{n+2})$. 
Let 
\[
h^{-1}(\eta)=\eta+\sum_{\nu=2}^\infty b_\nu\cdot\eta^\nu
\]
be the expansion of the inverse function $h^{-1}$ of $h$ around $0$. 
Then we can calculate that 
\begin{eqnarray}
h^{-1}\circ f^m\circ h(\xi)
&=& h^{-1}(h(\xi)+A\cdot (h(\xi))^{n+1}+O(\xi^{n+2}))\nonumber \\
&=& h^{-1}(h(\xi)+A\cdot \xi^{n+1}+O(\xi^{n+2}))\nonumber \\
&=& (h(\xi)+A\cdot \xi^{n+1}+O(\xi^{n+2}))+\sum_{\nu=2}^\infty b_\nu\cdot (h(\xi)+A\cdot \xi^{n+1}+O(\xi^{n+2}))^\nu\nonumber 
\end{eqnarray}
\begin{eqnarray}
&=& (h(\xi)+A\cdot \xi^{n+1}+O(\xi^{n+2}))+\sum_{\nu=2}^\infty b_\nu\cdot (h(\xi)^\nu+O(\xi^{n+2}))\nonumber \\
&=& h^{-1}(h(\xi))+A\cdot \xi^{n+1}+O(\xi^{n+2})=\xi+A\cdot \xi^{n+1}+O(\xi^{n+2}), \nonumber 
\end{eqnarray}
and also that 
\begin{eqnarray}
h^{-1}\circ f^m\circ h(\xi)
&=& (h^{-1}\circ f\circ h)^m(\xi)\nonumber \\
&=& (h^{-1}\circ f\circ h)^{m-1}(\xi+O(\xi^{n+2}))
= \dots
= \xi+O(\xi^{n+2}), \nonumber 
\end{eqnarray}
which leads the contradiction. 

\subsubsection*{Proof of Theorem \ref{thm:main} $(iii)$}

Assume that $f$ is a polynomial of degree $d$ and that, for each neighborhood $\Omega$ of $0$, there exists a periodic cycle of $f$ included in $\Omega\setminus\{0\}$. 
Let $g$ be the Green function of the filled Julia set $K(f)$. 
Note that $g|_{K(f)}\equiv 0$, $f^*g=d\cdot g$, and that $g|_{I(f)}$ is a harmonic function valued in $\mathbb{R}_{>0}$, where $I(f):=\mathbb{C}\setminus K(f)=\{\xi\in\mathbb{C}\mid f^n(\xi)\to\infty\ {\rm as}\ n\to \infty\}$. 
Note also that $g$ is H\"older continuous (see
\cite[\S VIII, Theorem 3.2]{CG}). 

\begin{lemma}\label{lem:cremer}
The point $0$ lies in the boundary of the set $\{\xi\in\mathbb{C}\mid g(\xi)=0\}$. 
\end{lemma}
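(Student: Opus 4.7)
The plan is to observe that $\{g = 0\} = K(f)$, which reduces the lemma to showing that $0 \in J(f) = \partial K(f)$. Since $f(0) = 0$, the fixed point $0$ already lies in $K(f)$, so the remaining task is to prove that $0$ is not an interior point of $K(f)$.

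I would argue by contradiction: if $0 \in \mathrm{int}\, K(f)$, then $0$ belongs to a bounded Fatou component $U$ of $f$, and because $f(0) = 0 \in U$ the component satisfies $f(U) = U$. I would then invoke the standard classification of invariant Fatou components for polynomial iteration (Fatou--Siegel classification together with Sullivan's no wandering domains theorem, and the fact that Herman rings do not occur for polynomials) to conclude that $U$ is an attracting basin, a parabolic basin, or a Siegel disk. The parabolic case is immediately excluded because a parabolic fixed point always lies on $J(f)$ rather than in the interior of $K(f)$.

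The remaining work is to rule out the attracting and Siegel cases using the hypothesis that every neighborhood of $0$ contains a periodic cycle of $f$ inside $\Omega \setminus \{0\}$. In the attracting case one has $|\tau| < 1$, and on a sufficiently small linearizing chart (equivalently, on a disk where $|f(\xi)| \leq \rho |\xi|$ for some $\rho < 1$) every orbit converges to $0$, so no periodic orbit of $f$ other than $\{0\}$ can be contained there. In the Siegel case, $f|_U$ is conformally conjugate to the irrational rotation $w \mapsto \tau w$ on a disk, whose only periodic point is the center, so $0$ is the only periodic point of $f$ in $U$; choosing $\Omega \subset U$ again contradicts the hypothesis.

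I do not anticipate a serious obstacle: the argument is essentially structural, relying on the classification of invariant polynomial Fatou components together with the local normal forms for attracting and Siegel fixed points. The only points requiring care are using $\tau = f'(0) \neq 0$ to exclude super-attracting behavior, and using that $U$ contains the fixed point $0$ to upgrade eventual periodicity of $U$ to outright $f$-invariance.
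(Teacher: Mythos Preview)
Your argument is correct, but it takes a different route from the paper's. The paper argues directly that $0\in J(f)$ by observing that a polynomial has only finitely many non-repelling cycles (Fatou's inequality, \cite[\S III, Theorem 2.7]{CG}); hence the hypothesis forces \emph{repelling} cycles to accumulate at $0$, and since repelling periodic points lie in the closed set $J(f)$ one gets $0\in J(f)=\partial K(f)=\partial\{g=0\}$. Your proof instead assumes $0\in\mathrm{int}\,K(f)$ and eliminates each possible type of invariant Fatou component via local normal forms. Two remarks on the comparison: first, since your component $U$ already contains the fixed point $0$ and is therefore $f$-invariant, Sullivan's no-wandering-domains theorem is not needed---the classical classification of \emph{periodic} Fatou components suffices. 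Second, the paper's approach has the advantage of producing, as a by-product, repelling cycles arbitrarily close to $0$; this stronger conclusion is exactly what is used again in the proof of Lemma~\ref{lemma:key} (and Lemma~\ref{lem:leaf}), so the paper's choice dovetails with the rest of the argument, whereas your route would still require invoking the finiteness of non-repelling cycles at that later stage.
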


\begin{proof}
As the total number of non-repelling cycles of $f$ is finite (see \cite[\S III Theorem 2.7]{CG}) and every repelling cycle of $f$ lies in $J(f)$ (see \cite[Theorem 6.4.1]{Be}), 
we can conclude from the assumption that $0\in J(f)$. 
Thus the lemma follows from the fact that $J(f)$ coincides with the boundary $\partial K(f)$ of $K(f)$ (see \cite[\S III.4]{CG}). 
\end{proof}

Let $W'$ be a neighborhood of $C$ and $\psi\colon W'\to (-\infty, \infty]$ be a continuous function whose restriction $\psi|_{W'\setminus C}$ is psh. 
By shrinking $W'$, we may assume that $\psi$ is bounded from above on a neighborhood of $\partial W'$. 
Assuming that $\psi$ is not bounded from above, we will derive a contradiction. 
By shrinking $U_0$ in \S \ref{subsection:constr} if necessary, we will assume that $W'=X$. 

First we construct the function $G$ on $X$ as follows: 
Setting $a:=\frac{-\log d}{\log\lambda}$, 
define the function $G_1$ on $X_1$ by 
\[
G_1(p(z), \xi) := g(\xi)\cdot |z|^a 
\]
for $z$ such that $\lambda<|z|<1$ and $\xi\in U_0$. 
Similarly, define the function $G_2$ on $X_2$ by 
\[
G_2(p(z), \xi) := g(\xi)\cdot |z|^a
\]
for $z$ such that $\lambda_1<|z|<\lambda_2$ and $\xi\in U_1$. 
Note that $(i^-)^*G_1=G_2|_{X^-_2}$. 
Note also that $(i^+)^*G_1=G_2|_{X^+_2}$ holds, since 
\[
(i^+)^*G_1(p(z), \xi)=G_1(p(z), f(\xi))=g(f(\xi))\cdot |\lambda\cdot z|^a=d\lambda^a\cdot g(\xi) \cdot|z|^a=g(\xi) \cdot|z|^a
\]
holds for each for each $z$ such that $1<|z|<\lambda_2$. 
Thus $G_1$ and $G_2$ glue up to define a function on $X$, by which we define $G$. 

By shrinking $U_0$ if necessary, we may assume that $G< 1$ holds on $X$. 
We also assume that $\psi<0$ holds on a neighborhood of the boundary $\partial X$ of $X$ by replacing $\psi$ with $\psi-M$ for sufficiently large real number $M$ if necessary. 
Then the following lemma holds: 

\begin{lemma}\label{lemma:key}
There exists a connected leaf $L$ of $\mathcal{F}$ such that $\overline{L}\cap \partial X\not=\emptyset$
and there exists a interior point $p\in L$
such that $p$ attains the maximum value $B$ of the function $H:=\frac{\psi}{-\log G}$ on $L$. 
Moreover, $B$ is a positive real number. 
\qed
\end{lemma}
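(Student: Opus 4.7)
My plan is to set $B := \sup_{\{G > 0\}} H$ and to produce the required leaf $L$ and point $p$ by taking a maximizing sequence for this supremum. First I would verify that $B > 0$. Since $\psi$ is unbounded above while $\psi < 0$ on a neighborhood of $\partial X$, there must be an interior point of $X$ at which $\psi$ is positive; by continuity of $\psi$ together with Lemma \ref{lem:cremer} (which gives $0 \in \partial K(f)$, so that the set $\{\xi \in \mathbb{C} : g(\xi) > 0\}$ accumulates at $\xi = 0$), I can choose such a point $q_0$ with also $G(q_0) > 0$. Since $G(q_0) < 1$, this yields $H(q_0) > 0$ and hence $B > 0$.

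Next I would pick a maximizing sequence $q_n \in \{G > 0\}$ with $H(q_n) \to B$. Because $H < 0$ on a neighborhood of $\partial X$, this sequence stays away from $\partial X$, and after extraction $q_n \to q_\infty \in X$. In the easy case $G(q_\infty) > 0$, continuity of $H$ on $\{G > 0\}$ immediately gives $H(q_\infty) = B < \infty$; I set $p := q_\infty$ and let $L$ be the leaf through $p$. Since $\xi(p) \in I(f)$, the forward iterates $f^k(\xi(p)) \to \infty$ must eventually exit $U_0$, so $\overline{L} \cap \partial X \neq \emptyset$ as required, and we are done.

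The delicate case is $G(q_\infty) = 0$: then $-\log G(q_n) \to +\infty$ and so $\psi(q_n) \to +\infty$, which by continuity of $\psi$ into $(-\infty, \infty]$ forces $q_\infty \in C$. To handle this I would exploit the leafwise structure: on each leaf $L_n$ through $q_n$, $\psi|_{L_n}$ is subharmonic while $-\log G|_{L_n}$ is a positive harmonic function (as noted in the construction of $G$), so the family $F_{n,t} := \psi|_{L_n} - t \cdot (-\log G)|_{L_n}$ is subharmonic on $L_n$ for every real $t$. For $t$ slightly less than $H(q_n)$ we have $F_{n,t}(q_n) > 0$, while $F_{n,t}$ is negative near the $\partial X$-end of $L_n$ (where $\psi < 0$ and $-\log G$ remains bounded), so the maximum principle on the Riemann surface $L_n$ would yield an interior maximizer of $F_{n,t}$; letting $t \uparrow H(q_n)$ and then $n \to \infty$ I would extract a leaf $L$ and interior point $p \in L$ with $H(p) = B$. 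The hard part will be preventing this extracted maximizer from escaping to $C$ in the limit: this requires a quantitative comparison between the growth of $\psi$ along the $C$-end of $L_n$ and the explicit growth of $-\log G = -\log g(\xi) - a\log|z|$, making essential use of the Hölder continuity of the Green function $g$ and the dense packing of $I(f)$-points near $0$ provided by Lemma \ref{lem:cremer}.
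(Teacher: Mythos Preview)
Your proposal follows a different route from the paper's, and the ``hard case'' you isolate is a genuine gap rather than a routine detail.

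The paper does not try to show that the global supremum $\sup_{\{G>0\}} H$ is attained. Instead it first \emph{chooses} the leaf $L$ and only then maximizes (the upper semicontinuous regularization of) $H$ on that single leaf. The choice of $L$ is the whole point and uses the periodic--cycle hypothesis of Theorem~\ref{thm:main}\,$(iii)$ directly: near a point $p_0\in C$ with $\psi(p_0)=\infty$ one selects, inside a fiber neighborhood $U_2$ on which $\psi>0$, a repelling cycle $\{f(\eta),\dots,f^m(\eta)=\eta\}\subset U_2\setminus\{0\}$. Lemma~\ref{lem:leaf} then manufactures a sequence $\{\eta_n\}\subset U_1\setminus K(f)$ with $f(\eta_{n+1})=\eta_n$ whose accumulation set is exactly this cycle; the leaf $L$ through the $\eta_n$ therefore has one end reaching $\partial X$ (since $\eta_0\in I(f)$) and the other end accumulating on a compact subset of $\{G=0\}\setminus C$, where $H^*=0$. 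Thus $\{q\in L: H^*(q)>0\}$ is nonempty (because $\psi>0$ on $U_2$) and relatively compact in $L$, so the maximum $B$ exists on $L$ and is positive.

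Your hard case is precisely what this construction is engineered to avoid, and the remedy you sketch does not close it. H\"older continuity of $g$ gives only $g(\xi)\le C|\xi|^\alpha$, i.e.\ a \emph{lower} bound $-\log G\ge -\alpha\log|\xi|+O(1)$; this is the wrong direction for bounding $H=\psi/(-\log G)$ from above when $\psi\to+\infty$. Since nothing in the hypotheses limits the growth rate of a continuous $\psi$ (psh off $C$) near $C$ --- for instance $\psi$ could locally look like $-N\log|\xi|$ for arbitrarily large $N$ --- nothing prevents $H(q_n)\to+\infty$ along sequences $q_n\to C$ with $G(q_n)>0$. The same obstruction breaks your maximum--principle argument for $F_{n,t}=\psi-t(-\log G)$ on $L_n$: you control the sign at the $\partial X$--end but not at the end approaching $C$. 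In short, you never use the periodic--cycle hypothesis beyond Lemma~\ref{lem:cremer}, and that hypothesis (via Lemma~\ref{lem:leaf}) is exactly what supplies the missing compactness by pinning the leaf's inner end to a set where $H^*$ vanishes.
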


\begin{proof}
Consider the function
\[
H^*(q):=\limsup_{\zeta\to q}H(\zeta), 
\]
which is a upper semi-continuous extension of $H$. 
Then, as the function $\psi$ is locally bounded from above on $X\setminus C$, it is clear that $H^*(q)=0$ holds for each point $q\in\{G=0\}\setminus C$. 
From the assumption, there exists a point $p_0\in C$ such that $\psi(p_0)=\infty$ holds. 
Fix a sufficiently small neighborhood $U_2$ of $p_0$ in $\pi^{-1}(\pi(p_0))$ such that $\psi|_{U_2}>0$ and regard it as a subset of $U_1(\subset U_0)$ (here the continuity assumption for $\psi$ is needed). 
From the assumption and the fact that the total number of non-repelling cycles of $f$ is finite (\cite[\S III Theorem 2.7]{CG}), 
there exists a repelling cycle $\{f(\eta), f^2(\eta), \dots, f^m(\eta)=\eta\}\subset  U_2\setminus \{0\}$. 
Fix a sequence $\{\eta_n\}_{n\geq 0}\subset U_1\setminus K(f)$ such that $f(\eta_{n+1})=\eta_n$ and the set of all accumulation points of $\{\eta_n\}_n$ coincides with the cycle $\{f^n(\eta)\}_n$ (see Lemma \ref{lem:leaf} for the existence of such a sequence). 
Clearly there is a connected leaf $L$ of $\mathcal{F}$ such that, for sufficiently large $n$, the point $\eta_n\in U_2\subset\pi^{-1}(\pi(p_0))$ lies in $L$. 
Note that $\overline{L}\cap \partial X\not=\emptyset$ follows immediately from $\eta_0\in U_1\setminus K(f)\subset I(f)$. 
%

Since the function $\psi|_L$ is negative around $\overline{L}\cap \partial X$ and $H^*(f^n(\eta))=0$ holds for each $n$, the set $\{q\in L\mid H^*(q)>0\}$ is relatively compact subset of $L$ 
(note that, as $\psi|_{U_2}>0$ holds, the set $\{q\in L\mid H^*(q)>0\}$ is not empty). 
Thus it follows from the upper semi-continuity of the function $H^*|_L$ that there exists a point $p\in\{q\in L\mid H^*(q)>0\}$ which attains the maximum $B>0$ of the function $H^*|_L$. 
\end{proof}

\begin{lemma}\label{lem:leaf}
Let $\eta\in J(f)\cap U_1$ be a point included in a repelling cycle of $f$. 
Then there exists a sequence $\{\eta_n\}_{n\geq 0}\subset U_1\setminus K(f)$ such that $f(\eta_{n+1})=\eta_n$ and the set of all accumulation points of $\{\eta_n\}_n$ coincides with the cycle $\{f^n(\eta)\}_n$. 
\end{lemma}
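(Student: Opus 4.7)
The plan is to exploit the Koenigs-type contraction at the repelling periodic point, and then interlace its iterates with short forward $f$-orbits to obtain a genuine backward $f$-orbit whose accumulation set is the whole cycle. Write $\eta^{(i)} := f^i(\eta)$ for $i = 0, \dots, m-1$, so that $\eta^{(0)} = \eta$ is a repelling fixed point of $f^m$ with multiplier $\lambda := (f^m)'(\eta)$ satisfying $|\lambda| > 1$. By Koenigs' theorem (equivalently, by the inverse function theorem applied to $f^m$ at $\eta$), there exists a small open disk $V^{(0)}$ around $\eta^{(0)}$, contained in $U_1$, on which the inverse branch $g := (f^m|_{V^{(0)}})^{-1}$ is defined and sends $\overline{V^{(0)}}$ into $V^{(0)}$. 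Because the cycle lies in $U_1$ (in the application, even in $U_2 \subset U_1$), continuity of $f$ lets us choose open neighborhoods $V^{(i)} \subset U_1$ of $\eta^{(i)}$ for $i = 1, \dots, m-1$ with $f(V^{(i)}) \subset V^{(i+1 \bmod m)}$ and $f^m(V^{(0)}) \subset V^{(0)}$, shrinking $V^{(0)}$ if necessary.

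Since $\eta^{(0)} \in J(f) = \partial K(f)$, the escaping set $I(f)$ meets every neighborhood of $\eta^{(0)}$; pick $\zeta_0 \in V^{(0)} \cap I(f)$ and set $\zeta_k := g^k(\zeta_0) \in V^{(0)}$. Then $\zeta_k \to \eta^{(0)}$, and by complete $f$-invariance of $I(f)$, each $\zeta_k$ (together with each of its forward $f$-iterates) lies in $I(f)$. Now define the backward $f$-orbit by interlacing: let $\eta_0 := \zeta_0$ and, for $k \geq 0$ and $j \in \{1, \dots, m\}$,
\[
\eta_{km+j} := f^{m-j}(\zeta_{k+1}).
\]
A direct check using $f^m(\zeta_{k+1}) = \zeta_k$ gives $f(\eta_{n+1}) = \eta_n$ for all $n \geq 0$. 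By the arrangement of the $V^{(i)}$'s, $\eta_{km+j} \in V^{((m-j) \bmod m)} \subset U_1$; combined with the previous observation, $\eta_n \in U_1 \setminus K(f)$ for every $n$.

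For the accumulation set, fix $j \in \{1, \dots, m\}$: by continuity,
\[
\eta_{km+j} = f^{m-j}(\zeta_{k+1}) \longrightarrow f^{m-j}(\eta^{(0)}) = \eta^{((m-j) \bmod m)}
\]
as $k \to \infty$. Since $\{\eta_n\}_{n \geq 1}$ is the disjoint union of these finitely many convergent subsequences, the set of accumulation points is precisely $\{\eta^{(0)}, \eta^{(1)}, \dots, \eta^{(m-1)}\}$, i.e., the full cycle. The only genuinely delicate step is the first one: producing the contracting local inverse branch $g$ of $f^m$ together with a matching cycle of neighborhoods $V^{(0)}, \dots, V^{(m-1)}$ inside $U_1$ along which $f$ permutes. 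After that, the whole argument reduces to the interlacing trick, which is what upgrades the backward $g$-orbit---whose only accumulation point is $\eta$ itself---into a backward $f$-orbit accumulating on the entire cycle.
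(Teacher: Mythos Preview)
Your approach is essentially the same as the paper's: use Koenigs at the repelling periodic point to pull back a point of $I(f)$ under the local inverse of $f^m$, then fill in the intermediate indices by forward $f$-iterates. The paper phrases the first step via an explicit linearizing coordinate $\xi'$ with $f^m(\xi')=\tau_\eta\cdot\xi'$ rather than an abstract inverse branch $g$, and handles the accumulation statement by taking an arbitrary convergent subsequence and reducing its indices modulo $m$, but the substance is identical.

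One slip worth fixing: the condition $f^m(V^{(0)})\subset V^{(0)}$ cannot hold on any bounded neighborhood of a repelling fixed point, and you never actually use it. What your argument needs (and implicitly uses when you assert $\eta_{km+j}\in V^{((m-j)\bmod m)}$) is only $f(V^{(i)})\subset V^{(i+1)}$ for $i=0,\dots,m-2$ together with the contraction $g(\overline{V^{(0)}})\subset V^{(0)}$; these are arranged by first fixing $V^{(m-1)}\subset U_1$, pulling back by continuity to get $V^{(m-2)},\dots,V^{(1)}$, and then shrinking $V^{(0)}$ so that both $f(V^{(0)})\subset V^{(1)}$ and the Koenigs contraction hold.
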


\begin{proof}
Let $m$ be the minimum positive integer which satisfies $f^m(\eta)=\eta$. 
Then $\eta$ is a repelling fixed point of the polynomial $f^m\colon \mathbb{C}\to\mathbb{C}$. 
Thus, by choosing suitable coordinate $\xi'$ of $\mathbb{C}$ such that $\xi'(\eta)=0$, 
we may assume that $f(\xi')=\tau_\eta\cdot \xi'$ for some complex number $\tau_\eta$ such that $|\tau_\eta|>1$ 
(see the comment near the proof of \cite[Theorem 6.4.1]{Be}). 
Fix a point $\xi_0\in U_1\setminus K(f)$ which is sufficiently close to the point $\eta$ 
and define the points $\eta_{j\cdot m}$ by $\xi'(\eta_{j\cdot m})=\tau_\eta^{-j}\cdot\xi'(\xi_0)$ for each integer $j$. 
For each integers $j\geq1$ and $n\in\{0, 1, \dots, m-1\}$, set $\eta_{m\cdot j-n}:=f^n(\eta_{m\cdot j})$. 
Then one can easily check the equation $f(\eta_{n+1})=\eta_n$. 

Let $\xi_1$ be an accumulation point of the sequence $\{\eta_n\}$. 
Then there exits a subsequence $\{n_k\}_k\subset\mathbb{Z}_{\geq 0}$ such that  $\lim_{k\to\infty}\eta_{n_k}=\xi_1$holds. 
Fix an integer $\ell\in\{0, 1, \dots, m-1\}$ such that the sub sequence $\{k_j\}_j:=\{k\mid n_k\equiv -\ell\ {\rm mod}\ m\}$ is infinite. 
Letting $n_{k_j}=\nu_j\cdot m-\ell$, we can calculate 
\[
\xi_1
=\lim_{j\to\infty}\eta_{n_{k_j}}
=\lim_{j\to\infty}\eta_{\nu_j\cdot m-\ell}
=\lim_{j\to\infty}f^{\ell}\left(\eta_{\nu_j\cdot m}\right)
=f^{\ell}\left(\lim_{j\to\infty}\eta_{\nu_j\cdot m}\right)
=f^\ell(\eta), 
\]
which shows the lemma. 
\end{proof}

\begin{proof}[End of proof of Theorem \ref{thm:main} $(iii)$]
Let $L, p, B$ be those in Lemma \ref{lemma:key}. 
As $B$ is the maximum of the function $H=\frac{\psi}{-\log G}$ on $L$, the inequality 
$\psi+B\cdot\log G\leq 0$ holds on $L$. 
As the function $\psi$ is psh and the function $(\log G)|_L$ is harmonic, we can conclude that $(\psi+B\cdot\log G)|_L$ is a subharmonic function on $L$. 
Since $\psi(p)+B\cdot\log G(p)=0$ holds and $p$ is an interior point of $L$, one can use maximum principle to show that $(\psi+B\cdot\log G)|_L\equiv 0$ holds. 
Thus 
\[
\psi|_L\equiv(-B\cdot\log G)|_L
\]
holds, which leads the contradiction since $\psi|_L<0$ holds around $\overline{L}\cap\partial X$ and $-B\cdot\log G>0$ holds on every point of $X$. 
\end{proof}

\begin{remark}
Here we give another (simplified) proof of Theorem \ref{thm:main} $(iii)$, 
which was taught by Professor Tetsuo Ueda. 

Let $\psi$ be a psh function defined on $W'\setminus C$. 
Fix a neighborhood $Y$ of $C$ in $W'$ such that $\overline{Y}\subset W'$ and $Y\subset W$. 
Set $M := \sup_{\partial Y} \psi$ 
and fix a compact leaf $\Gamma$ of $\mathcal{F}$ such that $\Gamma\cap\pi^{-1}(p(1))\subset U_1$ is a repelling cycle of $f$. 
Let $L$ be a leaf of $\mathcal{F}$ which accumulates to $\Gamma$ (this $L$ can be constructed in the same manner as in the above proof of Theorem \ref{thm:main} $(iii)$). 
Fix a holomorphic map $g \colon \{ z\in\mathbb{C}\mid 0 < |z| \leq R\} \to L$ for some $R>0$ which is an isomorphism to the image such that $g(\{|z|=R\}) \subset Y$ and $g(z)\to \Gamma$ as $z\to 0$.  
As the function $\psi\circ g$  is a psh function defined on $\{z\in\mathbb{C}\mid 0<|z|<R\}$ bounded from above, we can extend it and can regard it as a psh function defined on $\{z\in\mathbb{C}\mid |z|<R\}$. 
Thus we can use maximum principle to conclude that $\psi(g(z)) \leq M$ holds for each $z$ such that $|z|<R$．
Therefor we obtain the inequality $\psi |_{L\cap Y} \leq M$. 
From the assumption, such a leaf $\Gamma$ exists in any neighborhood of $C$ in $X$. 
Thus we can conclude from the above inequality that $\liminf_{q\to p}\psi(q) \leq M$ holds for each pint $p \in C$, which shows Theorem \ref{thm:main} $(iii)$. 
\end{remark}

\subsection*{Proof of Corollary \ref{cor:main}}

\subsubsection*{Proof of Corollary \ref{cor:main} $(i)$}

In this case, $0$ is a Siegel fixed point of $f$ and thus there exists a function $\Phi$ as in the proof of Theorem \ref{thm:main} $(i)$. 
By using this function $\Phi$, we can conclude that there exists a flat metric on $[C]|_W$ for some neighborhood $W$ of $C$, which clearly has semi-positive curvature. 
By using this flat metric, we can construct a smooth Hermitian metric on $[C]$ with semi-positive curvature from the same arguments as in the proof of \cite[Corollary 3.4]{K2}. 

\subsubsection*{Proof of Corollary \ref{cor:main} $(ii)$}

In this case, there exists a periodic cycle of $f$ included in $\Omega\setminus\{0\}$ for each neighborhood $\Omega$ of $0$ 
(see \cite[\S 5.4]{U83}). 
Let $h$ be a singular Hermitian metric on $[C]$ with semi-positive curvature such that $|f_C|_h$ is continuous around $C$, 
where $f_C$ is a global holomorphic section of $[C]$ whose zero divisor coincides with the divisor $C$. 
Then, as the function $\psi:=-\log |f_C|_h^2$ can be regarded as a continuous function defined on a neighborhood of $C$ which is psh outside of $C$, we can conclude form Theorem \ref{thm:main} $(iii)$ that there exits a positive constant $M$ such that $\psi<M$ holds on $X$, which shows the corollary (see also the proof of \cite[Theorem 1.1]{K3}). 

\section{Some examples}

\begin{example}
Let $p\colon \mathbb{C}^*\to C$ and $0<\lambda<1$ be those in \S \ref{subsection:constr}. 
Consider the rank-2 vector bundle on $E\to C$ defined by $E:=(\mathbb{C}^*\times\mathbb{C}^2)/\sim$, where $\sim$ is the relation generated by $(z, x, y)\sim (\lambda\cdot z, x, x+y)$. 
Let $X$ be the ruled surface associated to $X$: $X:=\mathbb{P}(E)$. 
$X$ admits a non-singular holomorphic foliation $\mathcal{F}$ whose leaves are either 
$\{\{[(z, x, y)]\in X\mid y=(c+n)\cdot x\ \text{for some}\ n\in\mathbb{Z}\}\}_{c\in\mathbb{C}/\mathbb{Z}}$ or $\{[(z, x, y)]\in X\mid x=0\}$. 
As $\{[(z, x, y)]\in X\mid x=0\}$ is naturally isomorphic to $C$, let us regard it as $C$ embedded in $X$: $C\subset X$. 
Then $(X, C, \mathcal{F})$ enjoys the conditions in Theorem \ref{thm:main}. 
In this case, the holonomy map $f(\xi)$ can be calculated as $f(\xi)=\frac{\xi}{1+\xi}$, which has $0$ as a rationally indifferent fixed point. 

Note that $E$ is a rank-$2$ degree-$0$ vector bundle which is the non-trivial extension of $\mathbb{I}_C$ by $\mathbb{I}_C$, where $\mathbb{I}_C$ is the trivial line bundle on $C$. 
According to \cite[\S 6]{N}, this $X$ is essentially the unique example of projective smooth surface in which smooth elliptic curve can be embedded as a curve of type $(\alpha)$ in Ueda's classification. 
Note also that this example is the same one as \cite[Example 1.7]{DPS94} (see also \cite[\S 4.1]{E}). 
\end{example}

\begin{example}
Let $C_0$ be a smooth elliptic curve embedded in the projective plane $\mathbb{P}^2$. 
Fix nine points $p_1, p_2, \dots, p_9$ from $C_0$ different from each other. 
Let us denote by $X$ the blow-up of $\mathbb{P}^2$ at $\{p_j\}_{j=1}^9$, and by $C$ the strict transform of $C_0$. 
From the simple calculation, it follows that the degree of the normal bundle $N_{C/X}$ is equal to $0$, and thus $N_{C/X}$ is topologically trivial. 
By choosing $\{p_j\}_{j=1}^9$ in sufficiently general position, we may assume that $N_{C/X}$ is a non-torsion element of ${\rm Pic}^0(C)$: 
i.e. there does not exist an integer $\ell$ such that $N_{C/X}^\ell=\mathbb{I}_C$. 
We here remark that the neighborhood structure of $C$ and the semi-positivity of $[C]$ in this example has been deeply investigated \cite{Br} \cite{U83} (see also \cite[\S 2]{D}). 
In order to determine a minimal singular metric of $[C]$ in this example by using Corollary \ref{cor:main}, we are interested in whether there exists an configuration of nine points $\{p_j\}_{j=1}^9$ such that there exists a foliation $\mathcal{F}$ and a submersion $\pi$ which satisfies the conditions $(1)$ and $(2)$. 
Unfortunately, we cannot give any answer to this question 
(here we remark that the question on the existence of holomorphic foliation on this $X$ has already been posed in \cite{DPS96}). 
\end{example}

\begin{example}\label{ex:cor}
Let $C$ be a smooth elliptic curve. 
Here we construct an holomorphic submersion $\widetilde{X}\to\Omega$ from a smooth complex manifold $\widetilde{X}$ of dimension three to a neighborhood $\Omega$ of $U(1):=\{\tau\in\mathbb{C}\mid |\tau|=1\}$ in $\mathbb{C}$ which satisfies the following conditions: 
$(a)$ there exists a submanifold $\widetilde{C}$ of $\widetilde{X}$ of dimension two such that the restriction $\widetilde{C}\to \Omega$ is a proper submersion and each fiber of this restricted map is isomorphic to $C$, 
$(b)$ $[\widetilde{C}]|_{X_\tau}$ is semi-positive for each $\tau\in\Omega\setminus U(1)$, where $X_\tau$ is the fiber of $\tau$, 
$(c)$ $[\widetilde{C}]|_{X_\tau}$ is also semi-positive for almost all $\tau\in U(1)$ in the sense of Lebesgue measure, 
and $(d)$ there exist uncountably many elements $\tau\in U(1)$ such that $[\widetilde{C}]|_{X_\tau}$ is not semi-positive. 

Fix a sufficiently small open neighborhood $\Omega$ of 
$U(1)$ in $\mathbb{C}^*$ and 
consider the function $F\colon\mathbb{C}\times \Omega\to \mathbb{C}\times \Omega$ defined by $F(\xi, \tau):=(\tau\cdot\xi+\xi^2, \tau)$. 
Fix also a sufficiently small neighborhood $\widetilde{U}_0$ of 
$\{0\}\times U(1)$ in $\mathbb{C}\times \Omega$ such that $f|_{\widetilde{U}_0}$ is locally isomorphic (note that the Jacobian determinant of $F$ at $(0, \tau)$ is $\tau$, which is a non-zero constant for each $\tau\in \Omega$). 
We may assume that $\widetilde{U}_0=U_0\times \Omega$ holds for some neighborhood $U_0\subset \mathbb{C}$ of $0$. 
By shrinking $U_0$, we may assume that $F|_{\widetilde{U}_0}$ is injective and thus it is an isomorphism to the image of it. 
Denoting by $\widetilde{V}_0$ the image $F(\widetilde{U}_0)\subset \mathbb{C}\times \Omega$, let us consider the sets 
$\widetilde{V}_1:=\widetilde{U}_0\cap \widetilde{V}_0$ and $\widetilde{U}_1:=(F|_{\widetilde{U}_0})^{-1}(\widetilde{V}_1)$. 
In what follows, we regard $F$ as an isomorphism from $\widetilde{U}_1$ to $\widetilde{V}_1$. 
Let $p, \lambda, \lambda_1, \lambda_2$ be those in \S \ref{subsection:constr}. 
Denote by $\widetilde{X}_1$ the set $p(\{z\in\mathbb{C}\mid \lambda<|z|<1\})\times \widetilde{U}_0$, and by $\widetilde{X}_2$ the set $p(\{z\in\mathbb{C}\mid \lambda_1<|z|<\lambda_2\})\times \widetilde{U}_1$. 
We define a complex manifold $\widetilde{X}$ by gluing up $\widetilde{X}_1$ and $\widetilde{X}_2$ in the same manner as in \S \ref{subsection:constr} by using the function $F$. 
Denote by $\widetilde{C}$ the submanifold defined by $C\times (\{0\}\times\Omega)\subset\widetilde{X}$ (here we are regarding $\{0\}\times\Omega$ as a subset of $\widetilde{U}_0$ and $\widetilde{U}_1$). 
The second projection $\widetilde{U}_0\to \Omega$  (and the restriction $\widetilde{U}_1\to\Omega$ of this map) induces the submersion $\widetilde{X}\to \Omega$. 

Clearly the fiber $X_\tau$ and the submanifold $\widetilde{C}\cap X_\tau\subset X_\tau$ satisfies the conditions $(1)$ and $(2)$ with the holonomy function $f(\xi)=\tau\cdot \xi+\xi^2$. 
Now we can easily check the conditions $(a), (b), (c), $ and $(d)$ by applying Corollary \ref{cor:main} (here we also used \cite[Theorem 6.6.5]{Be} and the fact that there exists uncountably many elements $\tau\in U(1)$ which satisfies the condition as in Corollary \ref{cor:main} \cite[p. 155]{C}, see also \cite[\S 5.4]{U83}). 
\end{example}


\end{document}